\documentclass[twoside,12pt]{article}
\usepackage{fullpage}
\usepackage{graphicx}
\usepackage{amsmath,amsthm}
\usepackage{amssymb,amsfonts,amsbsy}
\usepackage{dsfont,eufrak,mathrsfs}
\usepackage[utf8]{inputenc}
\usepackage{color}
\usepackage[T1]{fontenc}
\usepackage{tikz}

\newtheorem{Lema}{Lemma}[section]
\newtheorem{Teo}[Lema]{Theorem}

\newtheorem{Def}[Lema]{Definition}
\newtheorem{Cor}[Lema]{Corollary}
\newtheorem{Rem}[Lema]{Remark}
\newcommand*{\dsty}{\displaystyle}
\newcommand*{\fun}[3]{#1: #2 \rightarrow #3}

\newcommand*{\ColonEqq}{\mathrel{\mathop:}=}
\newcommand*{\Hom}[1]{\mathrm{Homeo}^{+}(#1)}

\newcommand*{\Zedp}{\mathbb{Z}^{+}}

\newcommand{\mcg}{{\rm MCG^*}}

\newcommand*{\Int}[1]{\mathrm{int}(#1)}
\newcommand*{\id}[1]{\mathrm{id}_{#1}}
\newcommand*{\tld}[1]{\widetilde{#1}}

\title{The Alexander method for infinite-type surfaces}
\author{Jes\'{u}s Hern\'{a}ndez Hern\'{a}ndez, Israel Morales, Ferr\'{a}n Valdez}
\date{}

\begin{document}
\maketitle
\begin{abstract}
  We prove that for any infinite-type orientable surface $S$ there exists a collection of essential curves $\Gamma$ in $S$ such that any homeomorphism that preserves the isotopy classes of the elements of $\Gamma$ is isotopic to the identity. The collection $\Gamma$ is countable and has infinite complement in $\mathcal{C}(S)$, the curve complex of $S$. As a consequence we obtain that the natural action of the extended mapping class group of $S$ on $\mathcal{C}(S)$ is faithful.
\end{abstract}
\section{Introduction}

Let $S$ be an orientable surface and $\mcg(S)$ the \emph{extended mapping class group} of $S$, that is the group of \emph{all} homeomorphisms of $S$ modulo isotopy. There is a natural non-trivial action of $\mcg(S)$ on $\mathcal{C}(S)$, the curve complex of $S$. This is 
the abstract simplicial complex whose vertices are the (isotopy classes of) essential curves in S, and whose simplices are multicurves of finite cardinality, see \cite{FarbMar} for a detailed discussion. 
%\begin{equation}
%\Psi:\mcg(S)\to {\rm Aut}(\mathcal{C}(S)).
%\end{equation}
It is well-known that this action is faithful whenever $S$ has finitely generated fundamental group (except for the closed surface of genus 2), see \cite{Ivanov}, \cite{Luo}, \cite{Korkmaz}. The proof of this fact follows from the so-called \emph{Alexander method} (see \cite{FarbMar}, \S 2.3 for details). Roughly speaking, this ``method''  states that the isotopy class of a homeomorphism of $S$ is
often determined by its action on a well-chosen collection of curves and arcs
in S. %Thus, there is a concrete way to determine when a class $f\in\mcg(S)$ acts trivially on $\mathcal{C}(S)$. 

The main purpose of this article is to extend the Alexander method for \emph{all} infinite-type surfaces, \emph{i.e.} when $\pi_1(S)$ \emph{is not} finitely generated. More precisely, we prove:
% MAIN THEOREM
\begin{Teo}
	\label{MdeA}
Let $S$ be an orientable surface of infinite topological type, with possibly non-empty boundary. There exists a collection of essential arcs and simple closed curves $\Gamma$ on $S$ such that any orientation-preserving homeomorphism fixing pointwise the boundary of $S$ that preserves the isotopy classes of the elements of $\Gamma$, is isotopic to the identity.
\end{Teo}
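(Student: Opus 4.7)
The plan is to reduce the infinite-type statement to a countable family of finite-type Alexander systems, chained together by a compact exhaustion, and then build the desired isotopy as an infinite concatenation whose supports escape to the ends of $S$.

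First, I would fix an exhaustion $S_1 \subset S_2 \subset \cdots$ of $S$ by connected finite-type subsurfaces such that each inclusion $S_n \hookrightarrow S_{n+1}$ is $\pi_1$-injective, each boundary component of $S_n$ is either a boundary component of $S$ or an essential curve in $S$, and $\overline{S_{n+1}} \setminus \Int{S_n}$ is a (possibly disconnected) finite-type surface with boundary. For each $n$ I would pick a finite Alexander system $\Gamma_n$ of essential simple closed curves and essential arcs inside $S_n$ in the sense of the classical (finite-type) Alexander method, making sure to include $\partial S_n \cap \Int{S}$ so that $\Gamma_n$ ``locks'' the topological position of $S_n$. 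The candidate collection for the theorem is $\Gamma \ColonEqq \bigcup_{n} \Gamma_n$, which is countable and, by construction, has infinite complement in $\mathcal{C}(S)$ as soon as we have not exhausted all isotopy classes (which is automatic for infinite-type $S$).

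Next, suppose $\varphi \in \Hom{S}$ fixes $\partial S$ pointwise and preserves the isotopy class of every element of $\Gamma$. I would show, step by step, that $\varphi$ is isotopic to a homeomorphism restricting to the identity on $S_n$. The key observation is that $\varphi(\Gamma_n)$ is isotopic to $\Gamma_n$ as a system in $S$, and because $\Gamma_n$ contains the boundary curves of $S_n$, a standard Alexander-trick isotopy supported in an arbitrarily small neighborhood of $\Gamma_n \cup \partial S_n$ will let me assume $\varphi$ setwise preserves $S_n$ and pointwise fixes a representative of each element of $\Gamma_n$. The finite-type Alexander method then gives an isotopy, supported in $S_n$, from $\varphi|_{S_n}$ to $\id{S_n}$. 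Extending by the identity produces an isotopy of $\varphi$ supported in (a neighborhood of) $S_n$ whose endpoint $\varphi_n$ agrees with $\id{S}$ on $S_n$.

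Finally, I would iterate this procedure carefully so that the isotopy from $\varphi_{n-1}$ to $\varphi_n$ is supported in $S \setminus \Int{S_{n-1}}$, and reparametrize the concatenation on the time intervals $[1-2^{-(n-1)},\,1-2^{-n}]$. Since each point $x \in S$ lies in $\Int{S_n}$ for all large $n$, only finitely many stages of the concatenation move $x$, so the limiting map $H: S \times [0,1] \to S$ is well-defined with $H_0 = \varphi$ and $H_1 = \id{S}$; continuity follows from the local finiteness of the supports (this is the standard argument that $\Hom{S}$ with the compact-open topology admits such ``telescoping'' isotopies). The genuine obstacle I anticipate is precisely this last patching step: one must choose the isotopies in stage $n$ to have support disjoint from $S_{n-1}$, which in turn forces care when invoking the classical Alexander method on $S_n$ so that the produced isotopy does not spill back into $S_{n-1}$. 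I expect this to be handled by first modifying $\varphi$ near $\partial S_{n-1}$ using the curves of $\Gamma_{n-1}$ already present in $\Gamma_n$, and then applying the relative version of the finite-type Alexander method on the pair $(S_n, S_{n-1})$.
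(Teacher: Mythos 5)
Your overall strategy --- exhaust $S$ by finite-type subsurfaces, straighten the homeomorphism piece by piece with the finite-type Alexander method, and finish with an infinite concatenation whose stages are supported farther and farther out --- is the same as the paper's. The gap sits exactly at the step you flag and then defer: you need the stage-$n$ isotopy to be \emph{relative to} $S_{n-1}$ (supported in $S \setminus \Int{S_{n-1}}$), and for this you invoke an unspecified ``relative version of the finite-type Alexander method on the pair $(S_n,S_{n-1})$''. No such statement is available off the shelf, and it is not a routine consequence of the absolute method: Proposition 2.8 of Farb--Margalit produces an isotopy of $S_n$ (at best rel $\partial S_n$), and knowing that $\varphi_{n-1}$ is the identity on $S_{n-1}$ and preserves the \emph{free} isotopy classes in $S$ of your curves does not directly yield an isotopy rel $S_{n-1}$: the possible discrepancies are Dehn twists about the components of $\partial S_{n-1}$, together with the more delicate passage from ``isotopic to the identity on $S_n$'' to ``isotopic to the identity rel the subsurface $S_{n-1}$''. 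Your nested filling systems do contain curves crossing each $\partial S_{n-1}$, which morally rules out the twists (this is the role of the dual family $B^{*}$ in the paper), but you give no argument converting preservation of isotopy classes in $S$ into triviality on the complementary piece rel its boundary, nor for upgrading the resulting isotopy to one relative to $S_{n-1}$; without that, the claimed support control, and hence the continuity of your telescoped isotopy, is unjustified.

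That missing step is precisely the content of Lemma~\ref{FirstStep} and the appendix. The paper (i) works with the complementary pieces $S_n \setminus \Int{S_{n-1}}$, equipping each with a finite \emph{stable} Alexander system (not every filling Alexander system works --- Figure~\ref{CEStabASyst} is a counterexample --- so your phrase ``finite Alexander system in the sense of the classical Alexander method'' also needs this care), (ii) adds for each exhaustion curve a dual curve crossing it and disjoint from the other exhaustion curves, and (iii) handles the gluing not by support bookkeeping but by showing, via Olum's obstruction theory and asphericity of $S$, that consecutive straightenings $g_n$ and $g_{n+1}$ are homotopic relative to $S_n$, concatenating the homotopies, and then invoking the Cantwell--Conlon result that homotopic homeomorphisms rel boundary are isotopic. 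Some substitute for this package is required to close your argument; as written, the proposal is incomplete at its crucial point.
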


As a matter of fact, we give a recipe to construct the collection $\Gamma$
in \S 
\ref{Sec:AlexanderMethod}, from which it is easy to see that $\Gamma$ is a ``small'' countable collection of curves (\emph{i.e.} $\mathcal{C}^0(S)\setminus\Gamma$ is infinite). Moreover, we prove that there are uncountably many different examples of this kind of collections, see lemma \ref{UncountablyMany}. Recall that any homeomorphism of $S$ that reverses orientation does not act trivially on $\mathcal{C}(S)$, see \cite{McCarthyPapadoupoulos}. As an immediate consequence of Theorem \ref{MdeA} we obtain:
%COROLLARY
\begin{Cor}
	\label{Corollary:FaithfulAction}
Let $S$ be an orientable surface of infinite topological type with empty boundary. Then the natural action of the extended mapping class group of $S$ on the curve complex $\mathcal{C}(S)$ is faithful.
\end{Cor}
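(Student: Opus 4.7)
The plan is a short reduction to Theorem \ref{MdeA}. Take a homeomorphism $f: S\to S$ whose class $[f]\in \mcg(S)$ acts trivially on $\mathcal{C}(S)$, i.e.\ $f$ preserves the isotopy class of every essential simple closed curve in $S$. First, I would rule out the orientation-reversing case: by the result of McCarthy--Papadopoulos cited in the introduction, no orientation-reversing homeomorphism can fix every vertex of $\mathcal{C}(S)$, so $f$ must be orientation-preserving. Since $\partial S=\emptyset$, the hypothesis of Theorem \ref{MdeA} asking $f$ to fix the boundary pointwise is vacuous.

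Next, I need to verify that $f$ preserves the isotopy class of every element of the collection $\Gamma$ produced by the recipe in \S\ref{Sec:AlexanderMethod}. For the simple closed curves in $\Gamma$ this is immediate from the hypothesis on $[f]$. The delicate case is that of essential arcs in $\Gamma$: on a closed surface these must have endpoints at punctures/ends of $S$. To bootstrap from curve-preservation to arc-preservation, I would pass to a small regular neighborhood $N(\alpha)$ of each such arc $\alpha\in\Gamma$; the boundary components of $N(\alpha)$ are essential simple closed curves whose isotopy classes, combined with how $f$ permutes the punctures of $S$, should pin down $[\alpha]$ up to a finite ambiguity that can then be resolved using the action of $f$ on further nearby curves in $\mathcal{C}(S)$.

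Once this bootstrap is in place, all hypotheses of Theorem \ref{MdeA} are satisfied, and the theorem yields that $f$ is isotopic to the identity, so $[f]$ is trivial in $\mcg(S)$; this proves faithfulness of the action. I expect the main obstacle to be the curves-to-arcs step sketched above. The orientation argument and the final appeal to Theorem \ref{MdeA} are essentially formal; moreover, if inspection of the recipe in \S\ref{Sec:AlexanderMethod} reveals that no arcs appear in $\Gamma$ when $\partial S=\emptyset$, the bootstrap is unnecessary and the corollary follows immediately.
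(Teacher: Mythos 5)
Your proposal is correct and follows the same route as the paper, which treats the corollary as an immediate consequence of the McCarthy--Papadopoulos fact (ruling out orientation-reversing homeomorphisms) together with Theorem \ref{MdeA}, the boundary condition being vacuous when $\partial S=\emptyset$. The curves-to-arcs bootstrap you worry about is indeed unnecessary: by the paper's definition an arc must have its endpoints on $\partial S$, so when $\partial S=\emptyset$ the collection $\Gamma$ built in \S\ref{Sec:AlexanderMethod} consists of simple closed curves only, which is exactly the escape hatch you anticipated in your last sentence.
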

%REMARK

It is important to remark that the preceding corollary was already known for infinite type surfaces $S$ for which all ends\footnote{See \S \ref{SECTION:Preliminaries} below for details on the space of ends of a surface.} carry (infinite) genus, see \cite{HernandezValdez}. However, the methods used in that paper to prove that a homeomorphism $h\in{\rm Homeo(S)}$ is isotopic to the identity require that $h$ fixes \emph{all} isotopy classes in $\mathcal{C}(S)$. Therefore the results we present here are both an extension and a refinement of the results obtained in [\emph{Ibid.}]. Moreover, corollary \ref{Corollary:FaithfulAction} will be used in a second paper\footnote{We decided to write a second paper since the methods  used to prove that any simplicial automorphism of $\mathcal{C}(S)$ is geometric are very different from the ones we present here. } to prove that the group of simplicial automorphisms of $\mathcal{C}(S)$ is naturally isomorphic to $\mcg(S)$ for \emph{any} infinite-type surface. This result is used in \cite{AramayonaFounar} to prove that every automorphism of a certain subgroup \footnote{These are called by the authors \emph{asymtotically rigid mapping class groups.}} of $\mcg(S)$ is induced by a homeomorphism of $S$, when $S$ is a genus $g\geq 0$  closed surface from which a Cantor set has been removed.

\textbf{Reader's guide}. In \S \ref{SECTION:Preliminaries}  we  make a short discussion on infinite-type surfaces and the generalities about curves, arcs and continuous deformations of elements in ${\rm Homeo}^+(S)$. The real content of the paper is found in \S \ref{Sec:AlexanderMethod} where we prove Theorem \ref{MdeA}. At the end we added an appendix to address some technical aspects of the proofs we present in section \ref{Sec:AlexanderMethod}.

\textbf{Acknowledgements}. 
The first author was supported during the creation of this article by the UNAM Post-Doctoral Scholarship Program 2016 at the CCM-UNAM.
The second author was generously supported by CONACYT and CCM-UNAM. The third author was generously supported by LAISLA and PAPIIT IN100115.

%%%%%%%%%%%%%%%%%%%%%%%%%%%%%%%%%%%%%%%%%%%%%%%%%
%%%%%%%%%%%%%%%%%%%%%%%%%%%%%%%%%%%%%%%%%%%%%%%%%
%%%%%%%%%%%%%%%%%%%%%%%%%%%%%%%%%%%%%%%%%%%%%%%%%
%%%%%%%%%%%%%%%%%%%%%%%%%%%%%%%%%%%%%%%%%%%%%%%%%
\section{Preliminaries}
\label{SECTION:Preliminaries}

\indent Let $S$ be an orientable surface. We say $S$ is a finite-type surface if its fundamental group is finitely generated; otherwise, we say $S$ is of infinite type. 

An infinite-type surface $S$ is determined, up to homeomorphism, by two invariants: its genus $g(S)\in\mathbb{N}\cup\{\infty\}$ and a pair of nested topological spaces $\rm Ends_\infty(S)\subset Ends(S)$ forming what is known as \emph{the space of ends} of $S$. 
Roughly speaking, the space $\rm Ends(S)$ codifies the different ways in which a surface tends to infinity and $\rm Ends_\infty(S)$ is the subspace formed by all those ends  that carry (infinite) genus. The spaces $\rm Ends(S)_\infty\subset Ends(S)$ are homemorphic to a pair of nested closed subsets of the standard triadic Cantor set. Conversely, every infinite-type topological surface can be constructed from such a pair $X_\infty\subset X$ as follows: think of $X_\infty\subset X$ as living in the unit interval used to construct the standard triadic Cantor set. Think of the unit interval as a subset of the sphere $\mathbb{S}^2$ and imagine that this sphere lives in $\mathbb{R}^3$. Push points in $X$ infinitely away from the origin. The resulting surface $S'$ has a space of ends homeomorphic to $X$. Now, for each $x\in X_\infty\subset X$ add a divergent sequence of handles to $S'$ that accumulates to $x$. This produces a surface $S$ whose space of ends is homeomorphic to $X_\infty\subset X$.

We refer the reader to the work of Ian Richards \cite{Richards} for a more detailed discussion on the topological classification of infinite-type surfaces. We do  not discuss these results further on since we do not need them for the proof of our results.

If $S$ is a finite-type surface of genus $g$, $n$ punctures and $b$ boundary components, we denote by $\kappa(S) = 3g -3 +n +b$ the \textit{complexity of} $S$. If $S$ is an infinite-type surface, we define its complexity as infinity.\\
%%%%%%%%%%%%%%%%%%%%%%%%%%%%%%%%%%%%%%%%%%%%%%%%
\indent A \textit{curve} on $S$ is a topological embedding $\fun{\alpha}{\mathbb{S}^{1}}{S}$. We say a curve is %\textit{nonperipheral} if it is not isotopic to the boundary curve of a neighbourhood of a puncture, and we say it is 
\textit{essential} if it is not isotopic to the boundary curve of a neighbourhood of a puncture, to a point, nor to a boundary component. All curves are considered to be essential unless otherwise specified.\\
%%%%%%%%%%%%%%%%%%%%%%%%%%%%%%%%%%%%%%%%%%%%%%%%
\indent An \textit{arc} on $S$ is a topological embedding $\fun{\alpha}{I}{S}$, where $I$ is the closed unit interval, and $\alpha(\partial I) \subset \partial S$. We consider all isotopies between arcs to be relative to $\partial I$, \emph{i.e.} the isotopies are not allowed to move the endpoints. We say an arc is \textit{essential} if it is not isotopic to an arc whose image is completely contained in $\partial S$. In this work, all arcs are considered to be essential unless otherwise specified.\\
%%%%%%%%%%%%%%%%%%%%%%%%%%%%%%%%%%%%%%%%%%%%%%%%
\indent We often abuse notation and use the terms ``curve'' and ``arc'' to refer to both the topological embeddings and the corresponding images on $S$. The context makes clear the meaning in each case.\\
%%%%%%%%%%%%%%%%%%%%%%%%%%%%%%%%%%%%%%%%%%%%%%%%
\indent Let $[\alpha]$ and $[\beta]$ be two isotopy classes of essential curves or arcs. The (geometric) \textit{intersection number of} $\alpha$ \textit{and} $\beta$ is defined as follows: $$i([\alpha],[\beta]) \ColonEqq \min \{|(\gamma \cap \delta) \cap \Int{S}| : \gamma \in [\alpha], \delta \in [\beta]\}.$$
%%%%%%%%%%%%%%%%%%%%%%%%%%%%%%%%%%%%%%%%%%%%%%%%
\indent Let $\alpha$ and $\beta$ be two essential curves or arcs on $S$, we say $\alpha$ and $\beta$ are in \textit{minimal position} if $|(\alpha \cap \beta) \cap \Int{S}| = i([\alpha],[\beta])$.

We denote $\rm Homeo^+(S;\partial S)$ the group of orientation-preserving homeomorphisms of $S$ which fix the boundary pointwise.

We conclude this section with a remark that is be fundamental for the rest of the article:

%Remark

\begin{Rem}[Equivalence between homotopy and isotopy]\label{RMK:HomotopyAndIsotopy}
Let $S$ be an infinite-type surface and $f,g\in \rm Homeo(S;\partial S)$. Then $f$ is homotopic to $g$ relative to the boundary if and only if $f$ is isotopic to $g$ relative to the boundary.
\end{Rem}

To simplify the exposition, henceforth we suppose that all homotopies and isotopies in this text are relative to the boundary.

%%%%%%%%%%%%%%%%%%%%%%%%%%%%%%%%%%%%%%%%%%%%%%%%
%%%%%%%%%%%%%%%%%%%%%%%%%%%%%%%%%%%%%%%%%%%%%%%%
%%%%%%%%%%%%%%%%%%%%%%%%%%%%%%%%%%%%%%%%%%%%%%%%
%%%%%%%%%%%%%%%%%%%%%%%%%%%%%%%%%%%%%%%%%%%%%%%%
%\subsection{Homotopy and isotopy}
%	\label{SS:HomoIsot}

%\indent The contrast between homotopy and isotopy can be a delicate point while working on surfaces. Isotopy clearly implies homotopy, but in general the converse is not true.\\
%\indent It is a well-known result (see Proposition 1.10 in \cite{FarbMar}) that homotopy and isotopy are equivalent relations for curves (and arcs) on surfaces of finite topological type. For infinite-type surfaces, this result is proved in the same way.\\
%\indent On the other hand, the same cannot be so easily said about homotopy and isotopy between homeomorphisms. 

For surfaces of finite-topological type the result is well-known, see \cite{Baer}, \cite{Epstein} and Theorem 1.12 in \cite{FarbMar}. For infinite-type surfaces, the results follow from the work of Cantwell and Conlon, see \cite{CCHomoIso}. % \comjhh{Add citation} prove the equivalence if certain conditions on the surface are met (e.g. if $\partial S \neq \varnothing$, then the homotopy must be relative to the boundary).\\
%\indent In this work however, we need not to worry about that since all the surfaces here considered satisfy said conditions, i.e. homotopy relative to the boundary of $S$ is equivalent to isotopy relative to the boundary of $S$.
%%%%%%%%%%%%%%%%%%%%%%%%%%%%%%%%%%%%%%%%%%%%%%%%%
%%%%%%%%%%%%%%%%%%%%%%%%%%%%%%%%%%%%%%%%%%%%%%%%%
%%%%%%%%%%%%%%%%%%%%%%%%%%%%%%%%%%%%%%%%%%%%%%%%%
%%%%%%%%%%%%%%%%%%%%%%%%%%%%%%%%%%%%%%%%%%%%%%%%%
\section{Alexander method}
\label{Sec:AlexanderMethod}

We begin this section by introducing the notion of an \emph{Alexander system} and then some technical lemmas. We then introduce the notion of a \emph{stable} Alexander system. These are collections of arcs and curves such that any $f\in {\rm Homeo^+(S;\partial S)}$ that preserves the isotopy classes of their elements is isotopic to the identity. In the last part of this section we prove theorem \ref{MdeA}, which states that stable Alexander systems exist for infinite type surfaces. We finish the section by showing that for any infinite-type surface $S$ there are actually \emph{uncountably} many different stable Alexander systems in $S$.\\ 
%DEFINITION
\begin{Def}
Let $\Gamma = \{\gamma_{i}\}_{i \in I}$ be a collection of essential curves and arcs on $S$. We say $\Gamma$ is an {\rm Alexander system} if it satisfies the following conditions:
\begin{enumerate}
 \item The elements in $\Gamma$ are in pairwise minimal position.
 \item For $\gamma_{i}, \gamma_{j} \in \Gamma$ with $i \neq j$, we have that $\gamma_{i}$ is not isotopic to $\gamma_{j}$.
 \item For all distinct $i,j,k \in I$, at least one of the following sets is empty: $\gamma_{i} \cap \gamma_{j}$, $\gamma_{j} \cap \gamma_{k}$, $\gamma_{k} \cap \gamma_{i}$.
\end{enumerate}
\end{Def}

\indent Note that any subset of an Alexander system is also an Alexander system and that we do not require the surface $S$ in the preceding definition to be of infinite type. The following result is the infinite-type surface version of Proposition 2.8 in \cite{FarbMar}, to the point that its proof is also completely analogous.
%%%%%%%%%%%%%%%%%%%%%%%%%%%%%%%%%%%%%%%%%%%%%%%%%
%%%%%%%%%%%%%%%%%%%%%%%%%%%%%%%%%%%%%%%%%%%%%%%%%
\begin{Lema}
	\label{MdeAFinito}
 Let $S$ be a connected orientable surface of infinite topological type with possibly nonempty boundary, and $\Gamma=\{\gamma_{1}, \ldots, \gamma_{k}\}$ be a finite Alexander system on $S$. If $h \in \Hom{S,\partial S}$ is such that for all $i = 1, \ldots, k$ we have that $h(\gamma_{i})$ is isotopic to $\gamma_{i}$, then there exists $f \in \Hom{S,\partial S}$ isotopic to the identity on $S$ relative to $\partial S$, such that $f|_{\Gamma} = h|_{\Gamma}$.
\end{Lema}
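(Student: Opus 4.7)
The plan is to argue by induction on $k = |\Gamma|$, closely following the strategy of Proposition 2.8 in \cite{FarbMar}; the infinite type of $S$ will not obstruct the argument since all the constructions are local around compact subsurfaces. For the base case $k=1$, the hypothesis that $h(\gamma_1)$ is isotopic to $\gamma_1$ combined with the classical ambient isotopy-extension theorem produces an element $H \in {\rm Homeo^+(S;\partial S)}$ isotopic to the identity with $H(\gamma_1) = h(\gamma_1)$ as sets. A further isotopy, supported in a regular annular or rectangular neighborhood of $\gamma_1 \cup h(\gamma_1)$, adjusts $H$ along $\gamma_1$ by the orientation-preserving self-map $h \circ H^{-1}$ of $h(\gamma_1)$ (which, being a homeomorphism of a circle or of an interval fixing its endpoints, is isotopic to the identity), yielding the required $f$ with $f|_{\gamma_1} = h|_{\gamma_1}$.

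For the inductive step, apply the inductive hypothesis to the sub-Alexander system $\{\gamma_1,\ldots,\gamma_j\}$ to obtain $f_j$ isotopic to the identity with $f_j|_{\gamma_i} = h|_{\gamma_i}$ for each $i \le j$. Setting $g := f_j^{-1}\circ h$, the map $g$ fixes $\gamma_1\cup\cdots\cup\gamma_j$ pointwise, and $g(\gamma_{j+1})$ is isotopic to $\gamma_{j+1}$ in $S$. It therefore suffices to produce $F \in {\rm Homeo^+(S;\partial S)}$, isotopic to the identity \emph{relative to} $\gamma_1\cup\cdots\cup\gamma_j$, with $F|_{\gamma_{j+1}} = g|_{\gamma_{j+1}}$; then $f := f_j \circ F$ completes the induction. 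Note that $\gamma_{j+1}$ and $g(\gamma_{j+1})$ meet each $\gamma_i$ ($i\le j$) at exactly the same finite set of transverse points and with the same local behavior.

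The main obstacle, and the technical heart of the argument, is producing $F$ via an innermost-bigon argument applied to $\gamma_{j+1}$ and $g(\gamma_{j+1})$, carried out in the complement of $\gamma_1\cup\cdots\cup\gamma_j$. The key use of condition (3) of an Alexander system is the following: if two elements $\gamma_a,\gamma_b$ (with $a,b \le j$) both entered an innermost bigon $B$ bounded by arcs of $\gamma_{j+1}$ and $g(\gamma_{j+1})$, they would each cross $\gamma_{j+1}$ nontrivially, and if they also intersected each other one would obtain three pairwise-intersecting elements of $\Gamma$, contradicting condition (3); minimal position between $\gamma_a$ and $\gamma_{j+1}$ further rules out spurious bigons between these two curves inside $B$. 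A short case analysis then shows that each innermost bigon can be collapsed by an ambient isotopy supported in a neighborhood of $B$ that is disjoint from $\gamma_1\cup\cdots\cup\gamma_j$. Iterating removes all bigons, after which $\gamma_{j+1}$ and $g(\gamma_{j+1})$ cobound a region disjoint from the other $\gamma_i$'s, and a final reparametrization isotopy supported there matches $F|_{\gamma_{j+1}}$ with $g|_{\gamma_{j+1}}$ exactly as in the base case.
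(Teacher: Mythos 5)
Your proposal is correct and takes essentially the same route as the paper: the paper offers no argument of its own for this lemma, saying only that the proof is ``completely analogous'' to Proposition 2.8 of \cite{FarbMar}, and your induction on the number of elements of $\Gamma$ — isotopy extension plus a reparametrization along the curve for the base case, then an innermost-bigon argument (exploiting that $g$ fixes $\gamma_1\cup\cdots\cup\gamma_j$ pointwise, minimal position, and condition (3)) to realize the isotopy of $\gamma_{j+1}$ relative to the previously fixed curves — is precisely that standard argument. The small points you leave implicit (that $h\circ H^{-1}$ is orientation-preserving on the circle, which holds since an essential curve on an orientable surface is never isotopic to its reverse, and the bookkeeping at the points of $\gamma_{j+1}\cap\gamma_a$ through which $g(\gamma_{j+1})$ necessarily passes) are of the same nature as what the paper itself leaves to \cite{FarbMar}.
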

\begin{Rem}\label{MdeAFNoConexo}
 Note that if $S$ is not connected and $\Gamma$ has only finitely many elements on each connected component of $S$, we can apply either Proposition 2.8 in \cite{FarbMar} or Lemma \ref{MdeAFinito} above to each connected component.
\end{Rem}
Both Lemma \ref{MdeAFinito} and Remark \ref{MdeAFNoConexo} are used repeatedly in the proofs that we present. 

One of the ideas in these proofs is to use convenient families of subsurfaces that exhaust a fixed infinite-type surface. We introduce these below.

%%%%%%%%%%%%%%%%%%%%%%%%%%%%%%%%%%%%%%%%%%%%%%%%%
%%%%%%%%%%%%%%%%%%%%%%%%%%%%%%%%%%%%%%%%%%%%%%%%%
\begin{Def}
Let $\{S_{i}\}$ be an (set-theoretical) increasing sequence of subsurfaces of $S$. We say $\{S_{i}\}$ is a \textit{principal exhaustion of} $S$ if $S = \dsty \bigcup_{i \geq 1} S_{i} $ and for all $i \geq 1$ it satisfies the following conditions:
\begin{enumerate}
 \item $S_{i}$ is a surface of finite topological type,
 \item $S_{i}$ is contained in the interior of $S_{i+1}$,
 \item $\partial S_{i} - \partial S$ is the finite union of pairwise disjoint essential curves on $S$, and
 \item each connected component of $S_{i+1} \backslash S_{i}$ has complexity at least 6.

 %\item For all $i \geq 1$, $$S \backslash \Int{S_{i}} = \bigsqcup_{j = 1}^{n(i)} R_{j}^{i},$$ where each $R_{j}^{i}$ is connected subsurface of $S$ of infinite topological with nonempty boundary. \comjhh{Is this one necessary}
\end{enumerate}
\end{Def} 
\indent Note that these exhaustions always exist in any infinite-type surface. See Figures \ref{LNMExhaustion} and \ref{PulpoGenFinito} for examples.

%%%%%TIKZ PIC
\begin{figure}[ht]
 \begin{center}
  \resizebox{10cm}{!}{\input{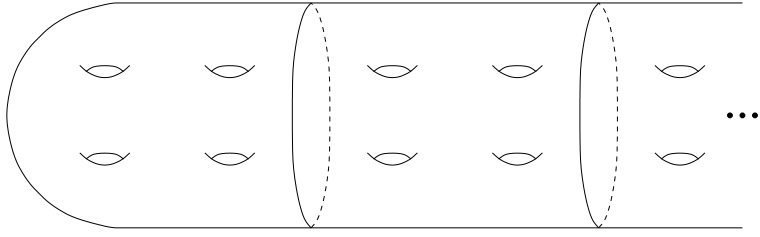}} \caption{An example of a principal exhaustion for the ``Loch Ness Monster'', i.e. the surface of infinite genus and one end.}\label{LNMExhaustion}
 \end{center}
\end{figure}

%%%%%TIKZ PIC
\begin{figure}[ht]
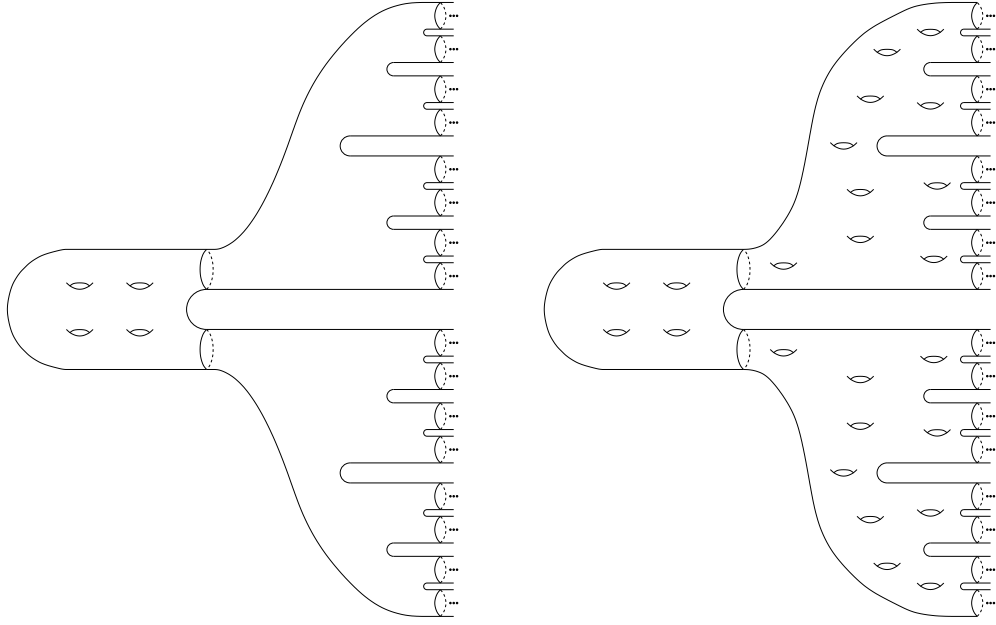

 \begin{center}
  \resizebox{6cm}{!}{\input{PulpoGenFinito.tikz}}\hspace{1cm} \resizebox{6cm}{!}{\input{PulpoGenInfinito.tikz}} \caption{On the left, an example of a principal exhaustion for the surface of genus $4$ minus a Cantor set. On the right, an example of a principal exhaustion for a surface of infinite genus, and whose space of ends that carry genus is a Cantor set.}\label{PulpoGenFinito}
 \end{center}
\end{figure}

\indent Let $\{S_{i}\}$ be a principal exhaustion of $S$. For each $i \geq 1$, we denote by $B_{i}$ the set of the boundary curves of $S_{i}$ that are essential curves on $S$. Note that for $i \neq j$ we have that $B_{i} \cap B_{j} = \varnothing$. We define the \textit{boundaries of} $\{S_{i}\}$, denoted by $B$, as $\dsty B \ColonEqq \bigcup_{i \geq 1} B_{i}$.\\
\indent The first step of the proof of Theorem \ref{MdeA} is the following.
%%%%%%%%%%%%%%%%%%%%%%%%%%%%%%%%%%%%%%%%%%%%%%%%%
%%%%%%%%%%%%%%%%%%%%%%%%%%%%%%%%%%%%%%%%%%%%%%%%%
\begin{Lema}
	\label{FirstStep}
 Let $S$ be an orientable surface of infinite topological type, $\{S_{i}\}$ a principal exhaustion of $S$, and $B$ be the boundaries of $\{S_{i}\}$. If $h \in \Hom{S;\partial S}$ is such that $h(\gamma)$ is isotopic to $\gamma$ for every $\gamma\in B$, then $h$ is isotopic to a homeomorphism $g \in \Hom{S;\partial S}$ for which $g|_{B} = \id{S}|_{B}$.
\end{Lema}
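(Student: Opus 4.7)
The plan is to build inductively a sequence $(h_i)_{i\geq 0}$ in $\mathrm{Homeo}^+(S;\partial S)$ with $h_0 = h$ and three properties: (a) each $h_i$ is isotopic to $h$; (b) $h_i$ is the identity on $B_1\cup\cdots\cup B_i$; and (c) $h_i|_{S_{i-1}} = h_{i-1}|_{S_{i-1}}$ (with $S_0 := \varnothing$). The desired $g$ will be the pointwise limit of this sequence, and property (c) is what makes both the limit and the limiting isotopy well-defined.

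For the inductive step, set $\Sigma_{i-1} := \overline{S\setminus S_{i-1}}$ and suppose $h_{i-1}$ is given. Since $h_{i-1}$ fixes $\partial S\cup B_1\cup\cdots\cup B_{i-1}\supseteq \partial \Sigma_{i-1}$ pointwise and is orientation-preserving, it preserves $\Sigma_{i-1}$ setwise. Observe that $B_i$ is a finite Alexander system in $\Sigma_{i-1}$: its curves are pairwise disjoint, hence in pairwise minimal position; they are pairwise non-isotopic because condition (4) in the definition of a principal exhaustion forbids annular components between consecutive $B_j$'s; and condition (3) is vacuous. Moreover, since each element of $\Sigma_{i-1}$ is essential in $S$, the standard incompressibility argument shows that each $h_{i-1}(\gamma)\simeq \gamma$ in $S$ (for $\gamma\in B_i$) can be realized by an isotopy inside $\Sigma_{i-1}$. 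Applying Lemma \ref{MdeAFinito} together with Remark \ref{MdeAFNoConexo} componentwise to $\Sigma_{i-1}$ yields $\tilde f_i$ on $\Sigma_{i-1}$, isotopic to the identity relative to $\partial\Sigma_{i-1}$, with $\tilde f_i|_{B_i}=h_{i-1}|_{B_i}$. Extending $\tilde f_i$ by the identity on $S_{i-1}$ produces $f_i\in\mathrm{Homeo}^+(S;\partial S)$ isotopic to $\mathrm{id}_S$, and we set $h_i:=f_i^{-1}\circ h_{i-1}$. Then $h_i\sim h$; moreover $h_i|_{B_i}=\mathrm{id}$ since $f_i^{-1}\circ h_{i-1}=f_i^{-1}\circ f_i=\mathrm{id}$ on $B_i$; and $h_i|_{S_{i-1}}=h_{i-1}|_{S_{i-1}}$ because $f_i|_{S_{i-1}}=\mathrm{id}$, which in particular preserves (b) at all earlier levels.

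Define $g(x):=\lim_{i\to\infty} h_i(x)$. By (c), for any $x\in S_k$ the sequence stabilizes at $h_{k+1}(x)$, so $g$ is well-defined and continuous, and $g|_{S_k}=h_{k+1}|_{S_k}$ is a homeomorphism of $S_k$ onto itself (using that $h_{k+1}$ preserves $\partial S_k$ pointwise and is orientation-preserving). Hence $g\in\mathrm{Homeo}^+(S;\partial S)$, and by construction $g|_B=\mathrm{id}|_B$. Finally, concatenating the isotopies $h_{i-1}\leadsto h_i$, reparametrized to occupy the interval $[1-2^{-(i-1)},\,1-2^{-i}]$, and setting the time-$1$ value equal to $g$, gives an isotopy from $h$ to $g$; continuity at $t=1$ follows from local finiteness, since each compact $S_k$ is affected only by the first $k+1$ of these isotopies and then frozen.

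The main obstacle is precisely this limiting step: each $\tilde f_i$ may have non-compact support in $\Sigma_{i-1}$, so a naive infinite composition would not converge either as a map or as an isotopy. What saves the construction is the deliberate extension of $\tilde f_i$ by the identity on $S_{i-1}$, which turns the family $(f_i)$ into one that is \emph{locally finite} on any compact subset of $S$; once this stabilization is in place, both the homeomorphism $g$ and the isotopy $h\leadsto g$ are automatic.
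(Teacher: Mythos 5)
Your construction of the approximating sequence is the same as the paper's: apply Lemma \ref{MdeAFinito} and Remark \ref{MdeAFNoConexo} to $B_i$ inside $\Sigma_{i-1}=S\setminus\Int{S_{i-1}}$, extend by the identity over $S_{i-1}$, correct $h_{i-1}$, and define $g$ as the stabilized limit (your $h_i$ are the paper's $g_i$). Where you genuinely diverge is in proving $h\simeq g$. The paper does not splice the isotopies coming out of the construction; instead it produces, via obstruction theory (Olum's theorem, treated in the appendix), homotopies $H_n$ from $g_n$ to $g_{n+1}$ that are constant on $S_n$, concatenates them into a homotopy from $g_1$ to $g$, and then invokes Remark \ref{RMK:HomotopyAndIsotopy} (Cantwell--Conlon) to upgrade the homotopy to an isotopy. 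You bypass both ingredients by using that the isotopy of $\tilde f_i$ to the identity is \emph{relative to} $\partial\Sigma_{i-1}$, so after extending by the identity the isotopy $t\mapsto (f_i^t)^{-1}\circ h_{i-1}$ from $h_{i-1}$ to $h_i$ is frozen on $S_{i-1}$ (this also uses $h_{i-1}(S_{i-1})=S_{i-1}$, which you have since $h_{i-1}$ fixes $B_1\cup\cdots\cup B_{i-1}$ pointwise); the dyadic concatenation is then continuous at $t=1$ for exactly the local-finiteness reason the paper uses for its homotopy. This is a more elementary route, and it is even slightly more careful than the paper in two spots the paper passes over (checking that $B_i$ is an Alexander system in $\Sigma_{i-1}$ via condition (4), and that the isotopies $h_{i-1}(\gamma)\simeq\gamma$, $\gamma\in B_i$, can be realized inside $\Sigma_{i-1}$ by incompressibility). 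Its cost is that it leans entirely on the ``isotopic to the identity relative to the boundary'' clause in Lemma \ref{MdeAFinito} — precisely the refinement of Farb--Margalit's Proposition 2.8 that the paper's obstruction-theoretic detour avoids having to use, since the appendix argument needs only that $g_n$ and $g_{n+1}$ are isotopic and agree on $S_n$. Granting Lemma \ref{MdeAFinito} as stated, your argument is complete.
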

%PROOF
\begin{proof}
 We divide the proof into two parts. In the first part we construct  the homeomorphism $g$ using Lemma \ref{MdeAFinito} and Remark \ref{MdeAFNoConexo}. As a consequence of the construction we have that $g|_{B} = \id{S}|_{B}$. In the second part, using classical results from obstruction theory (see the appendix and references therein), we show that $h$ and $g$ are isotopic.\\
%%%%%%%%%%%%%%%%%%%%%%%%%%%%%%%%%%%%%%%%%%%%%%%%%
\textbf{First part: the construction of $g$.} Remark that for each integer $i \geq 1$, $B_{i}$ is a finite Alexander system, both on $S$ and on $S \backslash S_{k}$ for each $k < i$. Since both $h$ and $B_{1}$ satisfy the conditions of Lemma \ref{MdeAFinito}, there exists $f_{1} \in \Hom{S;\partial S}$ isotopic to $\id{S}$ such that $f|_{B_{1}} = h|_{B_{1}}$. Let $g_{1} \ColonEqq f_{1}^{-1} \circ h$. Then $g_{1}|_{B_{1}} = \id{S}|_{B_{1}}$. As a consequence, we have that $g_{1}(S_{1}) = S_{1}$, and $g_{1}$ also fixes the isotopy classes of $B$. This implies that both the restriction to $S \backslash \Int{S_{1}}$ of $g_{1}$, and $B_{2}$ satisfy the conditions of Remark \ref{MdeAFNoConexo} on $S \backslash \Int{S_{1}}$.\\
 \indent Let $\tld{g}_{1} \ColonEqq g_{1}|_{S \backslash \Int{S_{1}}}$, then by Remark \ref{MdeAFNoConexo}, there exists $\tld{f}_{2} \in \Hom{S \backslash \Int{S_{1}};\partial S}$ isotopic to $\id{S \backslash \Int{S_{1}}}$ relative to $\partial (S \backslash \Int{S_{1}})$ such that $\tld{f}_{2}|_{B_{2}} = \tld{g}_{1}|_{B_{2}}$. Thus, we define the following:
 \begin{center}
  $f_{2}(s) = \left\{ \begin{tabular}{ll}
                       $s$ & if $s \in S_{1}$,\\
                       $\tld{f}_{2}(s)$ & otherwise.
                      \end{tabular}\right.$
 \end{center}
 \indent Note that $f_{2}$ is also isotopic to $\id{S}$. Then, we define $g_{2} \ColonEqq f_{2}^{-1} \circ g_{1} \in \Hom{S;\partial S}$.\\
 \indent By construction, $g_{2}$ satisfies that $g_{2}|_{B_{2}} = \id{S}|_{B_{2}}$, and $g_{2}|_{S_{1}} = g_{1}|_{S_{1}}$. Moreover, $g_{2}$ preserves the connected components of $S \backslash S_{2}$, and $g_{2}$ is isotopic to $g_{1}$.\\
 \indent In this manner, we inductively define for each $n > 2$ a homeomorphism $g_{n} \in \Hom{S;\partial S}$ such that:
 \begin{enumerate}
  \item $g_{n}$ is isotopic to $g_{n-1}$,
  \item $g_{n}|_{B_{n}} = \id{S}|_{B_{n}}$,
  \item for all $m < n$, we have that $g_{n}|_{S_{m}} = g_{m}|_{S_{m}}$.
 \end{enumerate}
 \indent We define the following map:
 \begin{center}
  \begin{tabular}{rccl}
   $g:$ & $S$ & $\rightarrow$ & $S$\\
    & $s$ & $\mapsto$ & $g_{n}(s)$ if $s \in S_{n}$.
  \end{tabular}
 \end{center}
 \indent This map is well-defined since all $g_{n}$ satisfy (3) above. Also, by construction we have that $g \in \Hom{S;\partial S}$ and $g|_{B} = \id{S}|_{B}$.\\
%%%%%%%%%%%%%%%%%%%%%%%%%%%%%%%%%%%%%%%%%%%%%%%%%
\textbf{Second part: the map $g$ is isotopic to $h$.} We claim that  for every $n\geq 1$ the map $g_n$ is homotopic to $g_{n+1}$ relative to $S_n$, that is, there exist a homotopy 
 $\fun{H_{n}}{S \times I}{S}$ satisfying:
 \begin{itemize}
  \item [(a)] $H_{n}|_{S \times \{0\}} = g_{n}$,
  \item [(b)] $H_{n}|_{S \times \{1\}} = g_{n+1}$ and
  \item [(c)] $H_{n}(x,t)=g_n(x)=g_{n+1}(x) \hspace{2mm}\text{for every $t\in I$ and $x\in S_n$}$.
\end{itemize}
We postpone the proof of this claim to the appendix.
%%%%%%%%%%%%%%%%%%%%%%%%%%%%%%%%%%%%%%%%%%%%%%%%%

For each $n\geq 1$, we define the homeomorphism
\begin{center}
  \begin{tabular}{rccl}
   $\zeta_{n}$: & $\left[\frac{n-1}{n},\frac{n}{n+1} \right]$ & $\rightarrow$ & $\left[0,1\right]$\\
    & $t$ & $\mapsto$ & $n\left(n+1\right)\left(t-\frac{n-1}{n}\right)$.
  \end{tabular}
 \end{center}

By conditions (a) and (b) over the family of homotopies $\{H_{n}\}$ the map $H:S\times \left[0,1\right] \to S$ given by 
\begin{center}
  $H(s,t) = \left\{ \begin{tabular}{ll}
                       $H_{n}\left(s,\zeta_{n}\left(t\right)\right)$ & if $t \in \left[\frac{n-1}{n},\frac{n}{n+1} \right]$,\\
                       $g(s)$ & if $t=1$.
                      \end{tabular}\right.$
 \end{center}
is well-defined. We claim that $H$ is a homotopy between $g_1$ and $g$. First, we note that for all $s\in S$, $H(s,0)=H_1(s,\zeta_1(0))=H_1(s,0)=g_1(s)$ and $H(s,1)=g(s)$. It remains to prove that $H$ is continuous. If $(s,t)\in S\times [0,1)$, then by definition, for some $n\geq 1$, $H$ and $H_n$ coincide in some open neighborhood of $(s,t)$. By the continuity of $H_n$, $H$ is continuous at $(s,t)$. Now, let $(s,1) \in S\times \{1\}$. Then there exists $n\geq 1$ such that  $s\in\Int{S_m}$ for all $m\geq n$. Choose $U_s$ an open neighborhood of $s$ properly contained in $\Int{S_m}$.  By condition (c), for all $m\geq n$ and $(s^\prime,t^\prime)\in U_s\times \left(\frac{n-1}{n},1\right]$, we have that $H(s^\prime,t^\prime)=H_m(s^\prime,t^\prime)=g_m(s^\prime)=g(s^\prime)$. Thus, $H$ coincides with $g$ in some open neighborhood of $(s,1)$ and we deduce that $H$ is continuous at $(s,1)$. %In short we have proven that $H$ is continuous. 

Finally, since $h$ is isotopic to $g_1$, we obtain that $h$ is homotopic to $g$. From Remark \ref{RMK:HomotopyAndIsotopy} we conclude that $h$ is isotopic to $g$, as desired. 
\end{proof}
%%%%%%%%%%%%%%%%%%%%%%%%%%%%%%%%%%%%%%%%%%%%%%%%%
\indent Let $N$ be a subsurface of $S$, and $A$ be a collection of curves and arcs on $S$ whose image is contained in $N$. We say $A$ \emph{fills} $N$ if $\Int{N \backslash A}$ is the disjoint union of open discs and once-punctured open discs. %This definition is independent of the topological type of $S$.
%DEF. STABLE ALEXANDER SYSTEM
\begin{Def}
Let $\Gamma$ be an Alexander system on $S$. We say $\Gamma$ is a \textit{stable Alexander system} if $\Gamma$ fills $S$ and every $f\in \Hom{S;\partial S}$ that preserves the isotopy classes of elements in $\Gamma$ is isotopic relative to the boundary, to the identity. 
\end{Def}

\begin{Rem}
	\label{RemStableAS}
In the preceding definition we do not require $S$ to be of infinite type. Also note that not every Alexander system that fills $S$ is stable. See Figures \ref{CEStabASyst} and \ref{PuncturedSurface} for a counterexamples and an example of stable Alexander systems on finite-type surfaces respectively.
\end{Rem}

%%%%%TIKZ PIC
\begin{figure}[ht]
 \begin{center}
  \resizebox{10cm}{!}{\input{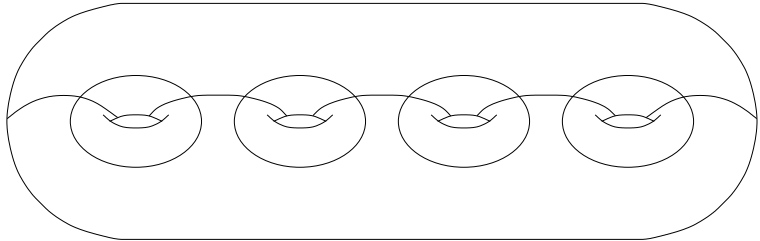}} \caption{An Alexander system that fills $S$ but is not stable.}\label{CEStabASyst}
 \end{center}
\end{figure}

\begin{figure}[ht]
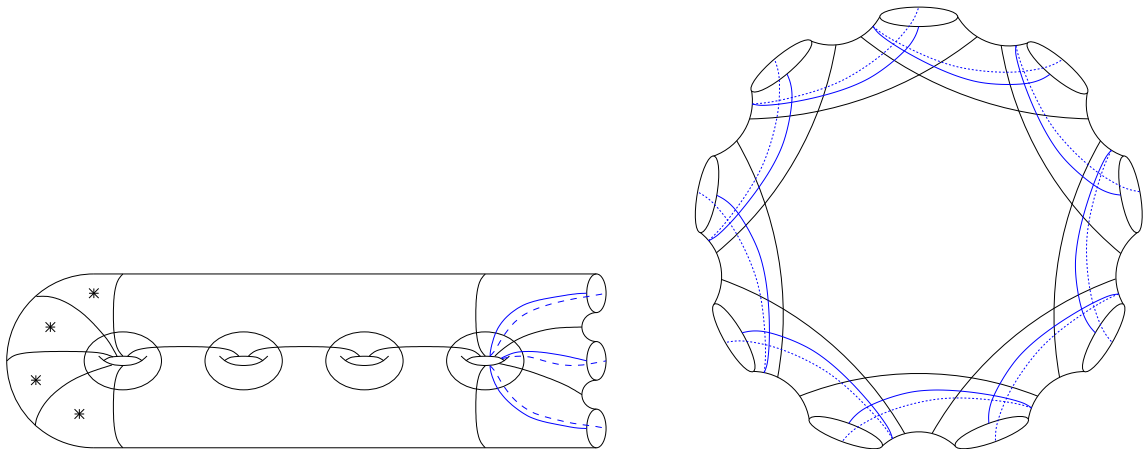

 \begin{center}
  \resizebox{8cm}{!}{\input{PuncturedSurface.tikz}}\hspace{1cm} \resizebox{6cm}{!}{\input{SphereNinegon.tikz}} \caption{On the left, an example of a stable Alexander system for a surface of genus $4$, with $4$ punctures and $3$ boundary components. On the right, an example of a stable Alexander system for a surface of genus $0$ with $9$ boundary components.}\label{PuncturedSurface}
 \end{center}
\end{figure}

%%%%%%%%%%%%%%%%%%%%%%%%%%%%%%%%%%%%%%%%%%%%%%%%%
\begin{proof}[\textbf{Proof of Theorem \ref{MdeA}}] First we construct an Alexander system $\Gamma$ that fills $S$ and then we show that this system is stable. 
 
Let $\{S_n\}$ be a principal exhaustion of $S$, and $B$ be the boundaries of $\{S_n\}$. For each $n\geq 1$, choose a finite stable Alexander system $C_n$ for $\Int{S_{n} \backslash S_{n-1}}$ (with $S_0=\varnothing$) and let $C$ be the union of the collection of $\{C_n\}$. See figure 
\ref{PuncturedSurface} for examples of stable Alexander systems.

Note that $B \cup C$ is not a stable Alexander system. There are homeomorphism which are not homotopic to the identity but that fix all the (isotopy classes of) elements in this collection, \emph{e.g.} any representative of a Dehn-twists along a curve in $B$. As we prove below, these are the only homeomorphisms showing this behaviour. 

Let $B^*$ be a collection of curves, and $\fun{f}{B}{B^{*}}$ a bijection satisfying that for all $\gamma \in B$ and all $\delta \in B \backslash \{\gamma\}$, $i([\gamma], [f(\gamma)]) \neq 0$ and $i([\delta], [f(\gamma)]) = 0$.
Remark that we can choose $B^*$ so that $\Gamma:=B\cup B^{*}\cup C$ is an Alexander system. 
Figure \ref{GammaPulpoGenFin} illustrates how to choose the collection $B^*$. We finish this proof by showing that $\Gamma$ is a \emph{stable} Alexander system. 

%%%%%TIKZ PIC
\begin{figure}[ht]
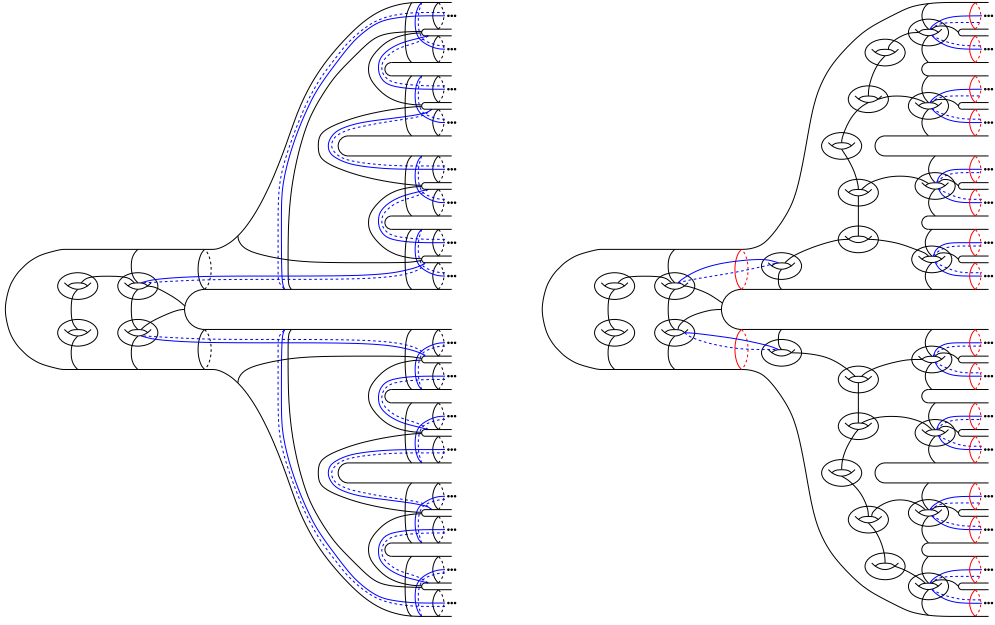

 \begin{center}
  \resizebox{6cm}{!}{\input{GammaPulpoGenFin.tikz}}\hspace{1cm} \resizebox{6cm}{!}{\input{GammaPulpoGenInf.tikz}} \caption{On the left, an example of $\Gamma$ for the surface of genus $4$ minus a Cantor set. On the right, an example of $\Gamma$ for a surface of infinite genus, and whose space of ends that carry genus is a Cantor set.}\label{GammaPulpoGenFin}
 \end{center}
\end{figure}

% \begin{itemize}
  %\item $B \cap B^{*} = C \cap B^{*} = \varnothing$.
 % \item There exists a bijection $\fun{f}{B}{B^{*}}$ such that for all $\gamma \in B$ and all $\delta \in B \backslash \{\gamma\}$, $i(\gamma, f(\gamma)) \neq 0$ and $i(\delta, f(\gamma)) = 0$.
 % \item The set $C_{n} \cup (B^{*} \cap (S_{n} \backslash \Int{S_{n-1}}))$ is a stable Alexander system for $S_{n} \backslash \Int{S_{n-1}})$.
 %\end{itemize}
 
%We define $\Gamma:=B\cup B^{*}\cup C$. By construction, $\Gamma$ is an Alexander system of curves on $S$ that fills $S$. We must prove now that $\Gamma$ is stable.\\

Let $h\in \Hom{S;\partial S}$ be such that $h$ fixes the isotopy class of every curve in $\Gamma$. We claim that $h$ is isotopic to the identity on $S$. 

First, since $h$ fixes the isotopy class of elements in $B\subset \Gamma$, by the Lemma \ref{FirstStep}, $h$ is isotopic to some $g\in \Hom{S;\partial S}$ such that $g|_{B} = \id{S}|_{B}$. In particular, $g$ also fixes all the isotopy classes of elements in $\Gamma$. 
 %\indent \comjhh{This paragraph follows from h being isotopic to g, doesn't it?}Now, let $n\geq 1$. We will prove that $g$ fix the isotopy class of arcs and curves of $\left(S_{n} \backslash \Int{S_{n-1}}\right)\cap \Gamma$. We note that $\left(S_{n} \backslash \Int{S_{n-1}}\right)\cap \Gamma=C_n\cup D_n$, where $D_n$ is the union of arcs in $\left(S_{n} \backslash \Int{S_{n-1}}\right)\cap B^{*}$. By hipothesis, we have that $g$ fix the isotopy class of curves in $C_n$. We claim that also $g$ fix the isotopy class of arcs in $D_n$. In effect, on the contrary, exist $\gamma^{*}\in B^{*}$ such that it is not isotopic to $g(\gamma^{*})$, which contradicts that $g$ preserve the isotopy class of curves in $\Gamma$.\\

Observe that for $n\geq 1$,  $S_{n} \backslash \Int{S_{n-1}}$ is a disjoint union of a finite number of finite-type surfaces with boundary, and by construction $\left(S_{n} \backslash \Int{S_{n-1}}\right)\cap \Gamma$ is a finite stable Alexander system on $S_{n} \backslash \Int{S_{n-1}}$. Then by definition of stable Alexander systems, the restriction of $g$ to $S_{n} \backslash \Int{S_{n-1}}$ is isotopic to the identity on $S_{n} \backslash \Int{S_{n-1}}$ relative to the boundary $\partial (S_{n} \backslash \Int{S_{n-1}})$. Given that $g|_{B} = \id{S}|_{B}$, the homeomorphism $g$ is isotopic to the identity on $S$.
\end{proof} 
%%%%%%%%%%%%%%%%%%%%%%%%%%%%%%%%%%%%%%%%%%%%%%%%%
%%%%%%%%%%%%%%%%%%%%%%%%%%%%%%%%%%%%%%%%%%%%%%%%%
\begin{Lema}\label{UncountablyMany}
 Let $S$ be an orientable connected surface of infinite topological. Then there exists uncountably many isotopy classes of stable Alexander systems on $S$.
\end{Lema}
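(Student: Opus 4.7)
The plan is to exploit the flexibility in the construction of $\Gamma$ given in the proof of Theorem \ref{MdeA}. Fix a principal exhaustion $\{S_n\}$ of $S$ with associated collection of boundaries $B$ and dual curves $B^*$, as in that proof. The freedom of choice in the construction lies in the finite stable Alexander systems $C_n$ for the connected components of $\Int{S_{n+1}\setminus S_n}$, each of which has complexity at least $6$.

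First I would establish that every connected component $N$ of $\Int{S_{n+1}\setminus S_n}$ admits at least two finite stable Alexander systems $C_n^0$ and $C_n^1$ with distinct underlying sets of isotopy classes. Since $N$ has complexity $\geq 6$, it contains many essential non-isotopic curves; starting from any stable Alexander system $C_n^0$ supported in $N$, picking an essential simple closed curve $\gamma \subset N$ not isotopic to any element of $C_n^0$ and setting $C_n^1 := T_\gamma(C_n^0)$, where $T_\gamma$ is a representative of a Dehn twist along $\gamma$, yields a second stable Alexander system supported in $N$ whose set of isotopy classes differs from that of $C_n^0$.

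Next, for each binary sequence $\sigma \in \{0,1\}^{\mathbb{N}}$, define
\[
\Gamma_\sigma \;:=\; B \,\cup\, B^* \,\cup\, \bigcup_{n\geq 1} C_n^{\sigma(n)}.
\]
By the construction of Theorem \ref{MdeA}, each $\Gamma_\sigma$ is a stable Alexander system. To conclude, I need to show that the map $\sigma \mapsto [\Gamma_\sigma]$ is injective on isotopy classes, i.e., that if $\sigma \neq \tau$ then $\Gamma_\sigma$ and $\Gamma_\tau$ determine distinct sets of isotopy classes of essential curves and arcs in $S$. Choosing $n$ with $\sigma(n) \neq \tau(n)$, one picks an isotopy class $[\gamma] \in C_n^{\sigma(n)} \setminus C_n^{\tau(n)}$ and verifies that $[\gamma]$ does not appear anywhere in $\Gamma_\tau$.

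The main obstacle is precisely this non-isotopy verification. The key point is that $\gamma$ is supported in the component $N \subset \Int{S_{n+1}\setminus S_n}$, which is bounded by elements of $B$. Any element $\delta$ of $\Gamma_\tau$ outside $C_n^{\tau(n)}$ is either (i) an element of $B$, (ii) an element of $B^*$, or (iii) contained in a different complementary piece $\Int{S_{m+1}\setminus S_m}$ with $m \neq n$. In cases (i) and (iii) the curve $\delta$ has zero geometric intersection with every element of $B$ distinct from itself while $\gamma$ does too but lies in the wrong complementary region, so one distinguishes them by looking at which component of $S \setminus B$ contains a minimal representative; in case (ii), $\delta$ has nonzero geometric intersection with some element of $B$ while $\gamma$ has zero intersection with all of $B$. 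In every case $[\delta]\neq [\gamma]$, which combined with $[\gamma]\notin C_n^{\tau(n)}$ shows $[\Gamma_\sigma] \neq [\Gamma_\tau]$, and so we obtain $2^{\aleph_0}$ distinct isotopy classes of stable Alexander systems on $S$.
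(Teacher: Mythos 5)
Your overall strategy is the same as the paper's: parametrize by binary sequences and realize the two choices per complementary piece of the exhaustion by modifying the construction of Theorem \ref{MdeA} there. The paper does this by applying (or not) a pseudo-Anosov homeomorphism $\varphi_n$ of the piece to the \emph{entire} portion of $\Gamma$ it carries, i.e.\ to $C_n$ \emph{and} to the dual curves $f(B_n)\subset B^*$, and injectivity is immediate because pseudo-Anosov maps move every isotopy class. The genuine gap in your version is the sentence ``by the construction of Theorem \ref{MdeA}, each $\Gamma_\sigma$ is a stable Alexander system.'' In that construction $B^*$ is chosen \emph{after}, and compatibly with, the systems $C_n$, precisely so that $B\cup B^*\cup C$ satisfies the Alexander-system axioms (in particular condition (3): among any three elements some pair is disjoint) and so that $(S_n\setminus\Int{S_{n-1}})\cap\Gamma$ is a finite stable system of each piece, which is what the stability argument uses. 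If you replace $C_n^0$ by $T_\gamma(C_n^0)$ while keeping $B^*$ fixed, this coordination is lost: $\gamma$ may intersect a curve of $B^*$ entering the piece, and after twisting, two elements of $T_\gamma(C_n^0)$ that meet each other can both meet that $B^*$ curve, violating condition (3); likewise the claim that the restriction of $\Gamma_\sigma$ to each piece is a finite stable system is no longer justified. So you cannot simply quote Theorem \ref{MdeA}; you must either re-choose $B^*$ for each $\sigma$ (your injectivity argument survives this, since it only uses the $C_n$-part and intersections with $B$), or, as the paper does, apply the modifying homeomorphism of the piece to all of $C_n\cup f(B_n)$ rather than to $C_n$ alone, so the combinatorial conditions are preserved automatically.

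A smaller issue: a single Dehn twist $T_\gamma$ need not change the \emph{set} of isotopy classes of $C_n^0$; a priori it could permute them. This is easily repaired: since $C_n^0$ fills, $\gamma$ meets some $c\in C_n^0$ essentially, the classes $T_\gamma^k(c)$ are pairwise distinct, and $C_n^0$ is finite, so a sufficiently high power of $T_\gamma$ works (or: if every power preserved the set of classes, some power would fix each class and, by stability of $C_n^0$, be isotopic to the identity, contradicting that Dehn twists have infinite order). Your injectivity verification — curves lying in distinct components of $S\setminus B$, none isotopic into $B$ and all disjoint from $B$, unlike the $B^*$ curves, are pairwise non-isotopic in $S$ — is correct, and is essentially an explicit version of what the paper compresses into the remark that pseudo-Anosov maps make $\Phi$ injective.
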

\begin{proof}
 Fix $\{S_{i}\}$ a principal exhaustion of $S$, and let $\Gamma = B \cup C \cup B^{*}$ be as in the proof of Theorem \ref{MdeA}. For each $i \geq 1$ let $\varphi_{i} \in \Hom{\Int{S_{i} \backslash S_{i-1}};\partial S}$ (with $S_{0} = \varnothing$) be a map that preserves each connected component of $\Int{S_{i} \backslash S_{i-1}}$ and whose restriction to each connected component is a pseudo-Anosov map.
 
Let $\mathcal{A}$ denote the set of all isotopy classes of stable Alexander systems of $S$. Note that $\mathcal{A}$ is invariant under the action of $\Hom{S;\partial S}$; hence we can define the following map:
 \begin{center}
  \begin{tabular}{rlcl}
   $\Phi :$ & $\{0,1\}^{\Zedp}$ & $\rightarrow$ & $\mathcal{A}$\\
    & $(\epsilon_{i})_{i \geq 1}$ & $\mapsto$ & $\dsty \left[B \cup \left(\bigcup_{i \geq 1} \varphi_{i}^{\epsilon_{i}}(C_{i})\right) \cup \left(\bigcup_{i \geq 1} \varphi_{i}^{\epsilon_{i}}(f(B_{i}))\right)\right]$;
  \end{tabular}
 \end{center}
 where $f$ is the bijection between $B$ and $B^{*}$. Since each $\varphi_{i}$ is a pseudo-Anosov map, the map $\Phi$ is injective. Thus, $\mathcal{A}$ is an infinitely uncountable set.
\end{proof}
%\commf{The proof of the preceding lemma is correct but I would like to revisit the details to see if we can explain some more to the reader.}
%%%%%%%%%%%%%%%%%%%%%%%%%%%%%%%%%%%%%%%%%%%%%%%%%
%%%%%%%%%%%%%%%%%%%%%%%%%%%%%%%%%%%%%%%%%%%%%%%%%
%%%%%%%%%%%%%%%%%%%%%%%%%%%%%%%%%%%%%%%%%%%%%%%%%
%%%%%%%%%%%%%%%%%%%%%%%%%%%%%%%%%%%%%%%%%%%%%%%%%
\appendix
\section{Appendix.}
\label{Appendix}
In this appendix we explain how classical results of obstruction theory are used to assure the existence of the homotopies $H_{n}$ used in the proof of lemma \ref{FirstStep}. Our discussion is based on the work of P. Olum \cite{Olum} and adapted to the context of orientable surfaces with boundary. 

\emph{The extension problem}. 
Let $S$ be an orientable surface\footnote{$S$ is not necessarily compact, nor necessarily of finite-type and its boundary might be non-empty.}  and $S'\subset S$ a subsurface. Consider two continuous functions $f_0,f_1:S\to S$ such that $f_0(s)=f_1(s)$ for all $s\in S'$. Following \cite{Olum}, let $\overline{S}_{01}'$ be the subset of $S\times I$ formed by $S\times{0}\cup S'\times I\cup S\times\{1\}$. Define $F:\overline{S}_{01}'\to S$ by:
\begin{equation}
F(s,t)=\begin{cases}
        f_0(s)\hspace{3mm}\text{for $(s,t)\in S\times\{0\}\cup S'\times I$}\\
        f_1(s)\hspace{3mm}\text{for $(s,t)\in S\times\{1\}$}        \end{cases}
\end{equation}
We say that  \emph{$f_0$ is homotopic to $f_1$ relative to $S'$} if $F$ defined above has a continuos extension to $S\times I$. The following result is an adaptation of theorem 25.2 in [\emph{Ibid.}] to the context of orientable surfaces with boundary; it gives criteria to determine when $f_0$ and $f_1$ are homotopic relative to $S'$. We state the result first, then explain its content and finally show how to apply to the proof of lemma \ref{FirstStep}.
%OLUM Theorem
\begin{Teo}
	\label{THM:Olum}
Let $f_0,f_1:S\to S$ be two continuous functions such that $f_0(s)=f_1(s)$ for all $s\in S'$. Let $s_0\in S'$ and $\theta_i:\pi_1(S,s_0)\to\pi_1(S,f_i(s_0))$ be the homomorphism induced by $f_i$, for $i=1,2$. For $k\in\{0,1,2\}$ fixed the following statements are equivalent:
\begin{enumerate}
\item (For $k=2$) $\mathbf{O}^k(f_0,f_1)\hspace{1mm}\rm rel.\hspace{1mm}S'$ is non-void and contains the zero element.
\item (For $1\leq k$) $f_0\simeq f_1\hspace{1mm}\text{\rm dim $k$ (rel $S'$)}$ .
\item (For $1\leq k$) $\mathbf{O}^{k+1}(f_0,f_1)\hspace{1mm}\rm rel.\hspace{1mm}S'$ is non-void.
\item The homomorphism $\theta_0$ and $\theta_1$ induced by $f_0$ and $f_1$ are equal.
\end{enumerate}
\end{Teo}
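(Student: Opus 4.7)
The strategy is to apply classical obstruction theory to the extension problem for the map $F: \overline{S}_{01}' \to S$ across the cylinder $S \times I$. Give $S \times I$ a CW (or simplicial) structure in which $\overline{S}_{01}'$ is a subcomplex. Given a partial extension of $F$ to the $k$-skeleton (relative to $\overline{S}_{01}'$), the standard obstruction to extending across the $(k+1)$-cells yields a cocycle in $C^{k+1}(S \times I, \overline{S}_{01}'; \pi_k(S, f_0(s_0)))$; the set $\mathbf{O}^{k+1}(f_0, f_1)\,\mathrm{rel}\,S'$ consists of the cohomology classes obtained in this way as the partial extension varies. The identity $(2)\Leftrightarrow(3)$ is then tautological: the existence of a homotopy in dimension $k$ relative to $S'$ is precisely the existence of an extension of $F$ to the $k$-skeleton, which is in turn equivalent to $\mathbf{O}^{k+1}(f_0,f_1)\,\mathrm{rel}\,S'$ being non-void.

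The equivalence with (4) is the heart of the matter and is established by computing the primary obstruction. The obstruction set $\mathbf{O}^{2}(f_0,f_1)\,\mathrm{rel}\,S'$ lives in $H^2(S \times I, \overline{S}_{01}'; \pi_1(S, f_0(s_0)))$. Evaluating the primary obstruction cocycle on a $2$-cell of $S \times I$ corresponding to a loop $\gamma$ in $S$ based at $s_0$ yields the element $\theta_0([\gamma]) \cdot \theta_1([\gamma])^{-1}$. Consequently this cocycle is zero precisely when $\theta_0 = \theta_1$, giving the equivalence $(1)_{k=2} \Leftrightarrow (4)$, and closing the loop of equivalences for the case $k=1$.

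To extend the equivalence to $k=2$ (and to pass from ``non-void in degree $k{+}1$'' to a full extension to $S \times I$), one invokes that any orientable surface of infinite type is aspherical: $\pi_k(S) = 0$ for all $k \geq 2$. Therefore all higher obstruction groups $H^{k+1}(S \times I, \overline{S}_{01}'; \pi_k(S))$ with $k \geq 2$ vanish, so once the primary obstruction is killed the extension can be pushed across every remaining cell. This collapses the tower of obstructions Olum considers in the general case and yields the equivalence of $(1)$, $(2)$, $(3)$ and $(4)$ in the stated range.

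The main technical obstacle, and the reason for re-stating Olum's theorem in this setting, is the presence of $\partial S$ and the requirement that homotopies fix the boundary. This is handled by arranging $\partial S \subset S'$ (or, more generally, including $\partial S \times I$ inside $\overline{S}_{01}'$ as part of the data), so that the relative CW pair $(S \times I, \overline{S}_{01}')$ already encodes the boundary constraint and the obstruction cochains vanish automatically on cells meeting $\partial S \times I$. With this setup the classical proof of Theorem 25.2 in \cite{Olum} transcribes verbatim, using only the asphericity of $S$ to dispose of higher obstructions.
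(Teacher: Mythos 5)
You should first be aware that the paper does not prove Theorem \ref{THM:Olum} at all: it is imported as an adaptation of Theorem 25.2 of \cite{Olum}, and the appendix only unpacks the notation and then applies it to Lemma \ref{FirstStep}. So your sketch is necessarily a different route, namely a reconstruction of Olum's classical argument, and its overall architecture is the right one: recast the homotopy rel $S'$ as an extension problem for $F$ over $S\times I$ rel $\overline{S}_{01}'$, identify the only genuine obstruction as the $\pi_1$-level comparison of $\theta_0$ and $\theta_1$, and dispose of everything above it using that the surfaces in question are aspherical (this is also exactly where the paper's caveat ``$S$ is not a sphere'' enters, since for $S=\mathbb{S}^2$ the implication $(4)\Rightarrow(2)$ for $k=2$ fails).

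Two steps, however, are off as written. First, your bookkeeping for the obstruction sets conflicts with the convention the appendix actually uses: there $\mathbf{O}^{k}(f_0,f_1)\ \mathrm{rel}\ S'$ is a subset of $H^{k}(S,S';\theta_0^*\pi_k)$. Since the relative $(k+1)$-cells of $(S\times I,\overline{S}_{01}')$ are exactly the cells $\sigma^{k}\times I$ with $\sigma^{k}$ a $k$-cell of $S$ not in $S'$, one has $H^{k+1}(S\times I,\overline{S}_{01}';G)\cong H^{k}(S,S';G)$, so what you call $\mathbf{O}^{k+1}$ is the paper's $\mathbf{O}^{k}$; in particular $\mathbf{O}^{2}$ carries coefficients in $\pi_2(S)$, not $\pi_1(S)$, and it is precisely this that lets the appendix conclude that $\mathbf{O}^{2}$ reduces to the zero class. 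Second, and more substantively, the ``primary obstruction cocycle'' valued in $\pi_1(S,f_0(s_0))$ is not a legitimate cohomology class: $\pi_1(S)$ is non-abelian for every surface relevant here, so the dimension-one comparison cannot be packaged in $H^2(S\times I,\overline{S}_{01}';\pi_1)$, and the relative $2$-cells of the cylinder are of the form $e\times I$ for edges $e$, not cells ``corresponding to a loop $\gamma$''. The correct version of this step (the one Olum proves, and the real content of $(2)\Leftrightarrow(4)$ for $k=1$) is: a homotopy of $f_0$ to $f_1$ over $S'\cup\tau^1$ rel $S'$ exists if and only if $\theta_0=\theta_1$ exactly (equality, not merely up to conjugation, because the basepoint lies in $S'$, where all vertex paths are constant); this is established by a direct construction over edges, not by evaluating a $\pi_1$-valued cocycle. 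With these corrections your outline does give the intended proof: non-voidness of $\mathbf{O}^2$ is the condition $\theta_0=\theta_1$, ``contains zero'' is automatic because $H^2(S,S';\theta_0^*\pi_2)=0$ when $\pi_2(S)=0$, and homotopy in dimension $2$ rel $S'$ is a full homotopy since $\dim S=2$. Your closing remark about $\partial S$ is harmless but not the crux; the theorem needs no special treatment of the boundary beyond including it in $S'$ when one wants the homotopy to be relative to it.
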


We now explain the content of this theorem. Let $\tau$ be a triangulation of $S$ and $\tau^k$ its $k$-skeleton (in particular $\tau^2=S$). Two functions $f_0$ and $f_1$ satisfying the hypothesis of the preceding theorem are \emph{homotopic in dimension $k$ relative to $S'$}, written $f_0\simeq f_1\hspace{1mm}\text{\rm dim $k$ (rel $S'$)}$, if $f_0$ restricted to $S'\cup \tau^k$ is homotopic to $f_1$ restricted to 
$S'\cup \tau^k$ relative to $S'$. This is precisely the content of point (2) above. In points (1) and (3) appears $\mathbf{O}^k(f_0,f_1)\hspace{1mm}\rm rel.\hspace{1mm}S'$, the \emph{$k^{th}$ obstruction to a homotopy of $f_0$ to $f_1$ relative to $S'$}. This is a subset of the cohomology group $H^k(S,S',\theta^*_0\pi_n)$. Here $\theta^*_0\pi_k$ is the system of (twisted) local groups $\theta^*_0\pi_k(S,s_0)$, where $\pi_k(S,s_0)$ is the $k^{th}$-homotopy group of $S$ with based at $s_0$. The definition of both the obstruction and the cohomology groups is well explained in [\emph{Ibid.}], but as we will see we do not need them because in our context all these objects are trivial. 
 
\emph{Constructing the homotopies $H_{n}$}. Recall from the proof of lemma \ref{FirstStep} that for each $n\in\mathbb{N}$ we have two isotopic maps $g_n,g_{n+1}\in {\rm Homeo}^+(S;\partial S)$ which coincide in $S_n\subset S$. Since the maps are isotopic, the isomorphisms that they induce in 
a fundamental group of $S$ with basepoint in $S_n$ are equal. In other words, these maps satisfy (4) in theorem \ref{THM:Olum}. 

By (3) above we have that 
$\mathbf{O}^2(f_0,f_1)\hspace{1mm}\rm rel.\hspace{1mm}S'$ is non-void. 
Since $S$ is not a sphere, the cohomology groups $H^2(S,S_n,\theta_0^*\pi_n)$ are trivial because all local coefficients $\pi_2(S,s_0)$ are trivial. Therefore, $\mathbf{O}^2(f_0,f_1)$ only contains the zero element, satisfying thus (1) for $k=2$. In particular, by (2) above we can conclude that 
$g_n$ and $g_{n+1}$ are homotopic in dimension 2 relative to $S'$, which is precisely what we wanted.

%BIBLIOGRAPHY  


\begin{thebibliography}{20}
\bibitem{AramayonaFounar} J. Aramayona and L. Funar. \textit{Asymptotic mapping class groups of closed surfaces punctured along Cantor sets}. arXiv:1701.08132
 \bibitem{Baer} R. Baer. \textit{Isotopien von Kurven auf orientierbaren, geshlossenen Fächen}. Journal für die Reine und Angewandte Mathematik. \textbf{159}, p.p. 101–116 (1928)
 \bibitem{CCHomoIso} J. Cantwell, L. Conlon. \textit{Hyperbolic geometry and homotopic homeomorphisms of surfaces}. Geom. Dedicata, vol. 177, no. 1, p.p. 27-42. (2015).
 \bibitem{Epstein} D.B.A. Epstein. \textit{Curves on  2-manifolds and isotopies}. Acta Math. \textbf{115}, p.p. 83–107 (1966)
 \bibitem{FarbMar} B. Farb, D. Margalit. \textit{A primer on mapping class groups}. Princeton Mathematical Series, Princeton University Press. (2011).
 \bibitem{HernandezValdez} J. Hern\'andez and F. Valdez. \textit{Automorphism groups of simplicial complexes of infinite-type surfaces}.  Publ. Mat. 61 (2017), no. 1, 51–82. 
 \bibitem{Ivanov} N. Ivanov. \textit{Automorphism of complexes of curves and of Teichm\"{u}ller spaces}.  Internat. Math. Res. Notices 1997, no. 14, 651–666. 
 \bibitem{Korkmaz} M. Korkmaz. \textit{Automorphisms of complexes of curves on punctured spheres and on punctured tori}.  Topology Appl. 95 (1999), no. 2, 85–111. 
 \bibitem{Luo} F. Luo, \textit{Automorphisms of the complex of curves.}  Topology 39 (2000), no. 2, 283–298.
 \bibitem{McCarthyPapadoupoulos} J. McCarthy, A. Papadopoulos. \textit{Simplicial actions of mapping class groups}. Handbook of Teichm\"uller theory. Volume III, IRMA Lect. Math. Theor. Phys., Vol 17, Eur. Math. Soc., Z\"urich, 2012, 297--423.
 \bibitem{Olum} P. Olum. \textit{Obstructions to extensions and homotopies}. Annals of Mathematics, vol. 52, no. 1, p.p. 1-50 (1950)
\bibitem{Richards} I. Richards. \textit{On the classification of noncompact surfaces}.  Trans. Amer. Math. Soc. 106 259–269. (1963)

 
\end{thebibliography}
\end{document}